\documentclass[12pt]{amsart}
\usepackage{amsmath}
\usepackage{amsfonts, amssymb, mathrsfs,bm}
\usepackage{amsthm, upref, paralist,setspace}
\usepackage{graphicx}
\usepackage{hyperref}
\usepackage[usenames, dvipsnames]{color}
\usepackage[margin=1in]{geometry}
\usepackage{aliascnt}
\usepackage{verbatim}

\allowdisplaybreaks[4]

\newcommand{\BR}{\mathbb{R}}

\newcommand{\SL}{\sum\limits}

\newcommand{\al}{\alpha}
\newcommand{\be}{\beta}
\newcommand{\ga}{\gamma}
\newcommand{\de}{\delta}

\newcommand{\ME}{\mathbb E}

\newcommand{\CF}{\mathcal F}
\newcommand{\CH}{\mathcal H}

\newcommand{\MP}{\mathbb P}

\newcommand{\MQ}{\mathbb Q}

\newcommand{\CW}{\mathcal W}

\newcommand{\Oa}{\Omega}

\newcommand{\si}{\sigma}

\newcommand{\pa}{\partial}
\renewcommand{\phi}{\varphi}

\newcommand{\norm}[1]{\lVert#1\rVert}
\renewcommand{\comment}[1]{}

\newcommand{\md}{\mathrm{d}}

\renewcommand{\d}{{\rm d}}

\DeclareMathOperator{\mes}{\text{Leb}}

\DeclareMathOperator{\dist}{dist}

\theoremstyle{plain}
\newtheorem{thm}{Theorem}[section]

\newtheorem{lemma}[thm]{Lemma}

\numberwithin{equation}{section}

\theoremstyle{definition}
\newtheorem{defn}{Definition}[section]
\newtheorem{asmp}{Assumption}[section]

\theoremstyle{remark}
\newtheorem{rmk}{Remark}[section]

\onehalfspacing
\begin{document}

\title[Talagrand Concentration Inequalities for SPDE]{Talagrand Concentration Inequalities for\\ Stochastic Partial Differential Equations}

\author{Davar Khoshnevisan and Andrey Sarantsev}

\address{ Department of Mathematics, University of Utah, Salt Lake City}

\email{davar@math.utah.edu}

\address{ Department of Mathematics and Statistics, University of Nevada, Reno} 
\email{asarantsev@unr.edu}

\date{Last version on November 24, 2018, based on the earlier version dated April 25, 2018.}

\begin{abstract}
One way to define the concentration of measure phenomenon is via Talagrand inequalities, also called transportation-information inequalities. That is, a comparison of the Wasserstein distance from the given measure to any other absolutely continuous measure with finite relative entropy. Such transportation-information inequalities were recently established for some stochastic differential equations. Here, 	we develop a similar theory for some stochastic partial differential equations. 
\end{abstract}

\keywords{Stochastic partial differential equations, 
	stochastic heat equation, stochastic fractional heat equation, 
	concentration of measure, transportation-information inequality, 
	relative entropy, Wasserstein distance}

\subjclass[2010]{60E15, 60J60, 60H15}

\maketitle

\thispagestyle{empty}

\section{Introduction}  
Let $(E\,, \rho)$ be a metric space with a Borel $\sigma$-algebra $\mathfrak B(E)$. 
Consider a Borel probability measure $\MQ$ on $E$. Define 
$A_r := \{x \in E:\, \dist(x\,, A) \le r\}$ for every Borel set 
$A \subseteq E$ and all $r > 0$; also, let $A_r^c := E\setminus A_r$ denote 
the complement of $A_r$ in $E$. Now, consider 
\begin{equation}\label{eq:alpha}
	\al(r) := \sup\left\{\MQ(A^c_r):\, A \in \mathfrak B(E),\, \MQ(A) \ge \tfrac12\right\}.
\end{equation}
The {\it concentration of measure phenomenon} is the property that
$\al(r)\approx0$ when $r\gg1$. The quality of the concentration of $\MQ$  depends
on the rate at which $\alpha(r)$ tends to zero as $r\to\infty$.

\smallskip

L\'evy initiated the study of concentration of measure by verifying that the normalized Lebesgue measure on $\mathbb{S}^{n-1}$ concentrates \cite[\S1.1]{LedouxBook}. The theory reached new heights in the work of Milman on the local theory of Banach spaces. Later on, Talagrand investigated the concentration of  product measures \cite{Talagrand2, Talagrand3, Talagrand5, Talagrand6}.
These references include also detailed pointers to the earlier parts of the literature. One of Talagrand's novel ideas in this direction was that ``a Lipschitz-continuous function of many variables, which does not depend much on any single variable, is nearly a constant''; see \cite{Bobkov2, NewBook,LedouxBook}. 

\smallskip

Concentration of measure is  related closely to the log-Sobolev and Poincar\'e inequalities 
\cite{Bobkov1, C1, C2, C3, Gozlan, Otto, VillaniBook}, with intimate connections
to information theory \cite{Bobkov3}, optimal transport  \cite{VillaniBook}, 
random matrices \cite{Bobkov4},  random graphs \cite[Chapter 2]{Graph}, 
and large deviations  \cite{DemboBook, Dembo}. Concentration of measure 
has been successfully applied to problems in stochastic finance \cite{Lacker},  
model selection in statistics \cite{Model}, and to the analysis of randomized 
algorithms \cite{Algorithm}. Among other things, concentration of measure has 
been established for the law of a large family of discrete-time Markov chains 
\cite{Marton1, Paulin, Samson},  discrete-time stationary processes \cite{Marton2},  
the solution of  a nice stochastic differential equation (SDE) \cite{LogConcave, Djellout, Pal, PalShkolnikov,Ustunel}, 
and for the law of the solution of a stochastic partial differential equation (SPDE) that is driven by a centered Gaussian 
noise that is white in time and whose spatial correlation operator is trace class 
\cite{Ustunel}. Since the latter SPDEs are  approximately finite-dimensional SDEs, 
it might be possible to derive concentration of measure for such SPDE from 
concentration for SDE. By contrast, our aim is to prove the concentration of 
measure for the law of the solution of a parabolic SPDE driven by space-time white noise.

\smallskip

To simplify our exposition, let us choose and fix two real numbers $D,T>0$,
a function $u_0\in L^\infty[0\,,D]$, and a second-order differential operator $\mathscr{L}$
that acts on $\varphi\in C^\infty([0, D])$ via
\[
    (\mathscr{L} \varphi)(x) := \tfrac12a^2(x)\varphi''(x) + b(x)\varphi'(x)\qquad
    \text{for all}\quad  x\in [0, D].
\]
When there is also a temporal
variable $t$, the ``prime'' continues to represent differentiation with respect
to the spatial variable $x$. Here, $a, b$ satisfy the following assumption:

\begin{asmp} $a,b\in C^\infty([0\,, D])$, and $a$ is  bounded
uniformly away zero and infinity.
\label{asmp:ab}
\end{asmp}

Consider also two measurable
functions $\sigma,g:[0\,,T]\times[0\,,D]\times\BR\to\BR$
such that $\BR\ni u\mapsto \sigma(t\,,x\,,u)$ and $\BR\ni u\mapsto g(t\,,x\,,u)$ are
Lipschitz continuous uniformly for $(t\,,x)\in[0\,,T]\times[0\,,D]$, and $\sigma$
is bounded. Throughout, we work on a filtered probability space 
$(\Oa\,, \mathcal F\,, \{\mathcal F_t\}_{t \in [0, T]}, \mathbf P)$ where the filtration 
$\{\mathcal F_t\}_{t \in [0, T]}$ is assumed to be  right continuous, $\mathcal F_0$ 
is assumed to be augmented with all $\mathbf P$-null sets, and $\mathcal F_T := \mathcal F$. 

With the background notation under way,
let us consider the SPDE:
\begin{equation}\label{eq:SPDE-basic}
	\frac{\pa}{\partial t}u(t\,, x) = (\mathscr{L} u)(t\,, x) +  g(t\,, x\,, u(t\,, x)) + \si(t\,,x\,, u(t\,, x))\,
	\frac{\pa^2}{\pa t\,\pa x} W(t\,, x),
\end{equation}
for $(t\,,x)\in(0\,,T]\times(0\,,D)$, subject to initial data $u_0$ and one of the
following boundary conditions on $[0\,,D]$:

\smallskip

\begin{compactenum}[(a)]
    \item \textbf{(Homogeneous Dirichlet).} $u(t\,,0)= u(t\,,D)= 0$ for all $t\in(0\,,T)$;
    \item \textbf{(Homogeneous Neumann).} $u'(t\,,0)=u'(t\,,D)=0$ for all $t\in(0\,,T)$; or
    \item \textbf{(Periodic Boundary).} $u(t\,,0)=u(t\,,D)$ and $u'(t\,,0)=u'(t\,,D)$ 
    	for all $t\in(0\,,T)$.
\end{compactenum}

\smallskip

The forcing term $W:=\{W(t\,,x)\}_{t\in[0,T],x\in[0,D]}$ of \eqref{eq:SPDE-basic}
denotes the \emph{two-parameter Brownian sheet}; that is, 
$W$ is a mean-zero Gaussian process with
\[
	\text{\rm Cov}[W(t\,,x)\,,W(s\,,y)] =
	(s\wedge t)(x\wedge y)\qquad
	\text{for all}\quad s\,,t\in[0\,,T]\quad \mbox{and}\quad x\,,y\in[0\,,D].
\]
It follows easily from the above that the weak derivative
$\xi:=\pa^2 W/(\pa t\,\pa x)$ is {\it space-time white noise};
that is, $\xi$ a generalized, centered, Gaussian random field with covariance measure
\begin{equation}
\label{eq:white-noise}
	\text{\rm Cov}\left[ \xi(t\,,x)\,,
	\xi(s\,,y)\right] =\delta_0(t-s)\,\delta_0(x-y),
\end{equation}
where $\de_0$ stands for the Dirac delta function centered at zero. 
This space-time white noise is adapted to the filtration $\{\mathcal F_t\}_{t \in [0, T]}$ 
and is generated by it: That is, for every $t \in [0\,, T]$, the $\sigma$-algebra 
$\mathcal F_t$ is generated by $\{W(s\,, x)\}_{s \in [0, t],\, x \in [0, D]}$, 
followed by the standard procedures of making the filtration right-continuous and augmented by 
$\MP$-null sets. It is well known that \eqref{eq:SPDE-basic} has a unique
predictable solution $u$ --- in the sense of Walsh \cite{WalshBook} --- 
that has H\"older-continuous trajectories.
See Walsh \cite[Chapter 3]{WalshBook} for the analysis of \eqref{eq:SPDE-basic}
in a specific case; the present, more general case follows from the theory of Dalang 
\cite{Dalang99}.
In particular, we mention that the \emph{solution} to \eqref{eq:SPDE-basic}
is understood in the following, mild, sense:
\begin{equation}\label{mild}\begin{split}
	u(t\,,x) &= \int_0^D G(t\,,x\,,y) u_0(y)\,\d y
		+\int_{(0,T)\times(0,D)} G(t-s\,,x\,,y) g(s\,, y\,, u(s\,,y))\,\d y\,\d s\\
	&\hskip1.5in+ \int_{(0,t)\times(0,D)} G(t-s\,,x\,,y)
		\,\si(s\,, y\,, u(s\,,y))\, W(\d s\,\d y),
\end{split}\end{equation}
where $G$ is the heat kernel for the operator $\mathscr{L}$ with the same boundary conditions 
as in \eqref{eq:SPDE-basic}; see also Dalang \cite{Dalang99,DalangSurvey}. The theory of
Walsh \cite[Chapter 3]{WalshBook} can be extended in a well-known, standard, way to deduce that
$u$ also satisfies the following moment bound:
\[
	\sup\limits_{t\in[0,T]}\sup\limits_{x\in[0,D]}
	{\mathbb E}\left( |u(t\,,x)|^p\right)<\infty
	\qquad\text{for all $p\in[1\,,\infty)$}.
\]

From now on, let $\MP$ denote the law of the solution $\{u(t, x)\}_{t\in[0\,,T],\, x \in [0, D]}$. 
In a standard way, we may view $\MP$ as a Borel-regular probability 
measure on the space $C([0\,,T]\times[0\,,D])$ of real-valued continuous 
functions on $[0\,,T]\times[0\,,D]$. We may also view $\MP$ as a Borel-regular 
probability measure on $L^p([0\,,T]\times[0\,,D])$ for every $p\in[1\,,\infty)$.

\smallskip

The following summarizes some of the main results of this paper in somewhat informal language.
More formal statements will come in due time: 

\begin{enumerate}
\item The measure $\MP$ concentrates as a Borel measure on $E=L^2([0\,,T]\times[0\,,D])$. 	
\item If $\si$ is a constant, then $\MP$  concentrates as a Borel measure on $E=C([0\,,T]\times[0\,,D])$. 
\end{enumerate}

It is well known that one can study concentration of measure by establishing 
{\it Talagrand concentration inequalities} (see \S4), otherwise known as 
{\it transportation-cost information inequalities} (TCI inequalities).
These are inequalities that compare Wasserstein distance with relative entropy. 
We verify our concentration results by showing that, in fact, $\MP$ satisfies a TCI inequality. 
A key step of the proof is to appeal to a suitable version of the Girsanov theorem. 
This is consistent with the use of the Girsanov theorem in the previous literature on 
concentration of measure for SDEs and SPDEs with regular noise 
\cite{LogConcave,Djellout,Pal,Ustunel}, and is intimately related to the earlier fact that 
the Girsanov theorem generally yields transportation inequalities in
the abstract Wiener space via $L\log L$-type entropy bounds, first discovered 
by Feyel and \"Ust\"unel in \cite{FeyelUstunel2002,FeyelUstunel2004}; see also \cite{UZBook}. We prove necessary technical results from scratch in Lemma~\ref{lemma:entropy} (Girsanov representation of equivalent measure) and Lemma~\ref{lemma:mgle} (martingale representation).

\smallskip

Our methods can readily be extended to study TCI inequalities for 
\eqref{eq:SPDE-basic} in case where $\mathscr{L}$ has another form
than the one studied here. One needs only a reasonable set of heat-kernel
estimates. The particular form of $\mathscr{L}$ is not germane
to the present discussion. An example of the kind of operator that can be studied
by the same methods that we employ is the fractional Laplacian 
$\mathscr{L} = -(-\Delta)^\gamma$, where $\ga \in(1\,,2)$ 
\cite{DalangFrac, Algeria, Nane}. Heat kernel estimates for this operator can be found in \cite{Chen00, Chen03}. 

\smallskip

For $d \ge 2$, the SPDE~\eqref{eq:SPDE-basic} does not have mild solutions as in~\eqref{mild} as classical functions, see 
\cite[pp. 31-32, Exercise 6.10]{Book}. However, it has solutions in Sobolev spaces. This theory was developed by Krylov in \cite[pp.231-233]{Krylov}, and in subsequent papers \cite[Subsection 2.2]{SimpleSPDE}, \cite{ColoredSPDE, Kim}. An interesting topic for future research would be to prove concentration inequalities for them in Sobolev norms, including the case of a colored noise instead of the white noise.

\subsection{Organization of the paper} In Section 2 we recall Talagrand concentration inequalities 
and state our main results: (a) Theorem~\ref{thm:main-1} for constant $\sigma$, 
and concentration in the space of continuous functions; and (b) Theorem~\ref{thm:main-2} 
for the general case, and concentration in the space $L^2$. Section 3 is devoted to 
proofs of these results. The Appendix contains the proof of a martingale representation 
theorem for space-time white noise. This sort of representation theorem is undoubtedly well known.
We include the proof as it is short and self contained.

\section{Concentration Inequalities for SPDE: Main Results}

\subsection{Background on Talagrand concentration inequalities} 

Recall from the Introduction that $(E\,, \rho)$ is a metric space with Borel $\sigma$-algebra 
$\mathfrak B(E)$. Fix a real number $p\ge1$, and recall that the {\it Wasserstein 
distance of order $p$} between two Borel probability measures $\MQ_1, \MQ_2$ on $E$ is defined as 
$$
	\CW_p(\MQ_1, \MQ_2) := \inf\limits_{\pi}\left[\int\left\{\rho(x\,, y)\right\}^p
	\pi(\d x\,\d y)\right]^{1/p},
$$
where the $\inf$ is taken over all {\it couplings} $\pi$
of $\MQ_1$ and $\MQ_2$. (A \emph{coupling} on $E$ is a Borel
probability measure $\pi$ on $E\times E$ whose 
marginal distributions are respectively $\MQ_1$ and $\MQ_2$.) 

The {\it relative entropy} 
$\CH(\MQ_2\mid\MQ_1)$ of $\MQ_2$ with respect to $\MQ_1$ is defined as follows: 
$$
	\CH(\MQ_2\mid\MQ_1) := \ME^{\MQ_2}\left[\log\frac{\md\MQ_2}{\md\MQ_1} \right]
	= \ME^{\MQ_1}\left[\frac{\md\MQ_2}{\md\MQ_1}\log\frac{\md\MQ_2}{\md\MQ_1}\right]
	\qquad\text{if}\quad \MQ_2 \ll \MQ_1,
$$
and $\CH(\MQ_2\mid\MQ_1) = \infty$ if $\MQ_2\not\ll\MQ_1$. See also
\cite{FeyelUstunel2002,FeyelUstunel2004}.
Here, we denote by $\ME^{\MQ}$ the expectation with respect to measure $\MQ$ for every
probability measure $\MQ$; that is, $\ME^{\MQ}f:=\int f\,\d\MQ$
for every bounded and measurable function $f:E\to\BR$.

\begin{defn} 
	We say that a Borel probability measure $\MQ_1$ satisfies the 
	{\it transportation-cost information (TCI) inequality of order} 
	$p$ with constant $C > 0$ when 
	\begin{equation}\label{eq:TCI}
		\CW_p(\MQ_1\,, \MQ_2) \le \sqrt{2C\CH(\MQ_2\mid\MQ_1)}.
	\end{equation}
	for every Borel probability measure 
	$\MQ_2$ on $E$. Throughout, we let $T_p(C)$ denote the set of all Borel probability measures 
	$\MQ_1$ that satisfy \eqref{eq:TCI} for every Borel probability measure $\MQ_2$ on $E$.
\end{defn}

According to H\"older's inequality,
$$
	T_{p'}(C) \subseteq T_p(C)\qquad\text{whenever $1\le p\le p'$ and $C>0$.}
$$
The following result from \cite{Marton} (see also \cite[p.\ 118]{LedouxBook}) 
relates TCI inequalities to the concentration of measure phenomenon: 
If $\MQ \in T_1(C)$, then the function $\al$ defined in~\eqref{eq:alpha} satisfies 
$$
	\al(r) \le {\rm e}^{- r^2/(8C)}
	\hskip1in\mbox{for}\ \ r \ge r_0 := 2\sqrt{2C\ln 2}.
$$
It is known that $\MQ\in T_1(C)$ for some constant $C > 0$ iff 
$\MQ$ has a \emph{sub-Gaussian tail}; that is,
$$
	\int_E{\rm e}^{[\rho(x_0, x)]^2/(2C)}\,\md\MQ(x) < \infty,
$$
for some, hence   all, $x_0 \in E$.
See  \cite{Bobkov1, Djellout}. Equivalently, $\MQ\in T_1(C)$ iff
\begin{equation}
	\label{eq:BG99}
	\int {\rm e}^{af}\mathrm{d}\MQ\le {\rm e}^{a^2C/2},
\end{equation}
for all $a\in\BR$ and every $1$-Lipschitz function $f:E\to\BR$ such that $\ME^{\MQ}f = 0$; here, $1$-Lipschitz means that satisfies
$|f(x)-f(y)|\le\rho(x\,,y)$ for all $x,y\in E$. 

\smallskip

It follows immediately from \eqref{eq:BG99} that every probability measure 
$\MQ\in \cup_{C>0}T_1(C)$ has sub-Gaussian tails.  
In particular, compactly-supported Borel probability measures are in $\cap_{C>0}T_1(C)$.
By contrast, similar descriptions of $T_p(C)$ for $p>1$ require more subtle analysis. 
For example, when $p>1$, the space $T_p(C)$ does not even
contain a non-trivial Bernoulli measure.
The space $T_2(C)$ has the particularly important property of {\it tensorization:}
If $\MQ_1$ and $\MQ_2$ are in $T_2(C)$, then $\MQ_1 \times \MQ_2$ is in $T_2(C)$
(as a Borel probability measure on $E\times E$, of course). This property
sets $T_2(C)$ apart as an important family of probability measures, and hence plays a central role
in the sequel.

\subsection{Main results}
Let us state the main results of this article. 
Recall from the Introduction the following assumptions on the functions $g$ and $\sigma$:

\begin{asmp}
There exists a real number $L_g > 0$ such that for all $(t\,,x)\in[0\,,T]\times[0\,,D]$ and $u,v\in\BR$,
\begin{equation}
\label{eq:g-Lip}
|g(t\,, x\,, u) - g(t\,, x\,, v)| \le L_g|u-v|.
\end{equation}
\label{asmp:g}
\end{asmp}

\begin{asmp} There exist real numbers $ L_{\sigma}, K_{\sigma}>0$ such that for all 
$(t\,,x)\in[0\,,T]\times[0\,,D]$ and $u,v\in\BR$,
\begin{equation}\label{eq:sigma-Lip}
|\sigma(t\,, x\,, u) - \sigma(t\,, x\,, v)| \le L_{\sigma}|u-v|;\qquad |\sigma(t\,, x\,, u)| \le K_{\si}.
\end{equation}
\label{asmp:sigma}
\end{asmp}

Let $(t\,,x\,,y)\mapsto G_{\infty}(t\,, x\,, y)$ denote the usual [Gaussian] heat kernel of the operator 
$\mathscr{L}$ on the whole real line instead of $[0\,, D]$, with coefficients $a$ and $b$ continued 
to $\BR$ as follows: $a(x) = a(0)$ for $x \le 0$, $a(x) = a(D)$ for $x \ge D$; similarly for $b$. 
It is well known that:

\smallskip

\begin{compactenum}
	\item $G\le G_{\infty}$, in the case of Dirichlet boundary conditions;
	\item $G = G_{\infty} + G_0$ for a smooth and bounded function $G_0$, in the case of 
		Neumann or periodic boundary conditions. 
\end{compactenum}

\smallskip

Because the second-order differential operator $\mathscr{L}$ is in divergence form,
Aaronson-type heat-kernel estimates imply that
$\mathcal{G}_{T,\al}<\infty$ for $\al\in[1\,,2)$, where
\begin{equation}\label{eq:G-const}
	\mathcal{G}_{T,\al} := \int_0^T [H(t)]^\alpha\,\md t < \infty,\quad\mbox{and}\quad
	H(t) := \sup\limits_{x \in [0\,, D]}\int_0^D [G(t\,, x\,, y)]^2\,\md y.
\end{equation}
See Bass \cite[Chapter 7, Theorem 4.3]{BassBook}, or \cite{HeatKernel}.
Consequently,
\begin{equation}\label{eq:GT2}
	\mathcal{G}_T := \sup\limits_{x \in [0, D]}\int_{(0,T)\times(0,D)}[G(t\,, x\,, y)]^2\,\md y\,\md t
	<\infty.
\end{equation}
As mentioned earlier, one can consider concentration inequalities in 
different Banach  spaces. Since $u$ is continuous, we can for example consider concentration
in the space of continuous functions on $(0\,,T)\times(0\,,D)$, endowed with norm,
\begin{equation}\label{eq:norm}
    \norm{u}_{\infty, T} := \max\limits_{(t, x) \in (0,T)\times(0,D)}|u(t\,, x)|.
\end{equation} 

\begin{thm}\label{thm:main-1}
For $\si \equiv 1$, under Assumptions~\ref{asmp:ab} and~\ref{asmp:g}, the law  $\MP$ of the solution $u$ of
    \eqref{eq:SPDE-basic}, viewed as a Borel probability measure on 
    $E=C([0\,, T]\times [0, D])$,  is in $T_2(C_{\infty})$
    with respect to the norm~\eqref{eq:norm}, with the constant
    \begin{equation}\label{eq:constant-1}
        C_{\infty} := 2\mathcal{G}_T{\rm e}^{2L_g^2T^2}.
    \end{equation}
\end{thm}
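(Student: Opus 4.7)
The plan is to follow the Girsanov-coupling strategy, in the spirit of the SDE arguments of Djellout-Guillin-Wu and Feyel-\"Ust\"unel. Given any Borel probability measure $\MQ$ on $C([0,T]\times[0,D])$ with $\CH(\MQ\mid\MP)<\infty$, I would first lift $\MQ$ to a probability measure $\mathbf Q$ on $(\Omega,\mathcal F)$ by setting $\md\mathbf Q/\md\mathbf P:=(\md\MQ/\md\MP)\circ u$, which gives $\mathbf Q\circ u^{-1}=\MQ$ and $\CH(\mathbf Q\mid\mathbf P)=\CH(\MQ\mid\MP)$. Applying the martingale representation for space-time white noise (Lemma~\ref{lemma:mgle}) together with the Girsanov representation (Lemma~\ref{lemma:entropy}), I would represent
$$\frac{\md\mathbf Q}{\md\mathbf P}=\exp\!\left(\int_{(0,T)\times(0,D)}\!\!h(s,y)\,W(\md s\,\md y)-\tfrac12\int_{(0,T)\times(0,D)}\!\!h(s,y)^2\,\md y\,\md s\right),$$
for a predictable integrand $h$ satisfying $\ME^{\mathbf Q}\!\int h^2\,\md y\,\md s=2\CH(\MQ\mid\MP)$; the latter identity is obtained from a Novikov-type truncation ensuring that $\int h\,\md\tilde W$ is a genuine martingale under $\mathbf Q$.

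By Girsanov, the random measure $\tilde W(\md s\,\md y):=W(\md s\,\md y)-h(s,y)\,\md y\,\md s$ is a space-time white noise under $\mathbf Q$. Because $\sigma\equiv 1$, the mild equation~\eqref{mild} rewrites, under $\mathbf Q$, as
$$u(t,x)=\int_0^D\!\!G(t,x,y)u_0(y)\,\md y+\int_{(0,t)\times(0,D)}\!\!G(t-s,x,y)\bigl[g(s,y,u(s,y))+h(s,y)\bigr]\md y\,\md s+\int_{(0,t)\times(0,D)}\!\!G(t-s,x,y)\,\tilde W(\md s\,\md y).$$
I introduce $\tilde u$ as the mild solution of the original SPDE~\eqref{eq:SPDE-basic} with initial datum $u_0$ driven by $\tilde W$. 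Uniqueness in law for~\eqref{eq:SPDE-basic} then ensures that, under $\mathbf Q$, the field $\tilde u$ has law $\MP$ while $u$ has law $\MQ$, so that $(\tilde u,u)$ realised on $(\Omega,\mathbf Q)$ is a coupling of $\MP$ and $\MQ$.

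The core estimate is a pointwise Gronwall bound on $u-\tilde u$. Subtracting the two mild formulations and invoking~\eqref{eq:g-Lip},
$$|u(t,x)-\tilde u(t,x)|\le L_g\!\int_{(0,t)\times(0,D)}\!\!G(t-s,x,y)|u(s,y)-\tilde u(s,y)|\,\md y\,\md s+\left|\int_{(0,t)\times(0,D)}\!\!G(t-s,x,y)h(s,y)\,\md y\,\md s\right|.$$
For the $h$-term, Cauchy-Schwarz in $(s,y)$ together with~\eqref{eq:GT2} gives the bound $\sqrt{\mathcal G_T}\,\|h\|_{L^2([0,T]\times[0,D])}$. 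For the $g$-term, the sub-Markov property $\int_0^D G(t-s,x,y)\,\md y\le 1$ (valid for Dirichlet, Neumann, and periodic boundaries alike) together with $|u(s,y)-\tilde u(s,y)|\le M(s):=\sup_x|u(s,x)-\tilde u(s,x)|$ yields $\int_0^D G(t-s,x,y)|u(s,y)-\tilde u(s,y)|\,\md y\le M(s)$. Using $(a+b)^2\le 2a^2+2b^2$ together with Cauchy-Schwarz in $s$, this collapses to the closed scalar inequality
$$M(t)^2\le 2L_g^2\,T\int_0^t M(s)^2\,\md s+2\mathcal G_T\|h\|_{L^2}^2,$$
and Gronwall's inequality then delivers $\|u-\tilde u\|_{\infty,T}^2\le 2\mathcal G_T\,\mathrm e^{2L_g^2T^2}\|h\|_{L^2}^2=C_\infty\|h\|_{L^2}^2$.

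Taking $\mathbf Q$-expectations and using that $(\tilde u,u)$ couples $\MP$ and $\MQ$,
$$\CW_2(\MP,\MQ)^2\le\ME^{\mathbf Q}\|u-\tilde u\|_{\infty,T}^2\le C_\infty\,\ME^{\mathbf Q}\|h\|_{L^2}^2=2C_\infty\CH(\MQ\mid\MP),$$
which is the $T_2(C_\infty)$ inequality to be proved. The main obstacle is not the Gronwall step but rather the rigorous Girsanov representation in the first paragraph: one needs the abstract martingale representation for the Brownian sheet and a truncation argument to relate $\ME^{\mathbf Q}\|h\|_{L^2}^2$ to $\CH(\MQ\mid\MP)$, both of which are precisely the contents of Lemmas~\ref{lemma:entropy} and~\ref{lemma:mgle}.
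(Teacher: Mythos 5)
Your proposal is correct and follows essentially the same route as the paper: lift $\MQ$ to $\mathbf Q$ via the Radon--Nikod\'ym derivative, invoke the martingale-representation and Girsanov lemmas to produce the drift $h$ (the paper's $X$) with $\ME^{\mathbf Q}\norm{h}_{T,2}^2=2\CH(\MQ\mid\MP)$, couple $\MP$ and $\MQ$ through the two mild equations driven by $\widetilde W$, and close a Gronwall estimate yielding the constant $2\mathcal G_T{\rm e}^{2L_g^2T^2}$. The only differences are cosmetic (you run Gronwall pathwise on the sup over $x$ and apply Cauchy--Schwarz in $s$ after the sub-Markov bound, whereas the paper applies Cauchy--Schwarz against the measure $G\,\md y\,\md s$ and takes expectations before Gronwall), and both give the identical constant.
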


\begin{rmk}
    Choose and fix an arbitrary $\eta>0$. A simple adaptation of the proof of 
    Theorem \ref{thm:main-1} shows that we may replace
    the condition  $\si\equiv1$ with $\si\equiv \eta$.
\end{rmk}

In the non-constant
case, we instead view the random continuous function $u$ as a random element in the space 
$L^2([0\,, T]\times[0\,, D])$, endowed with the norm $\norm{\cdot}_{T, 2}$, where
\begin{equation}\label{eq:norm-2}
	\norm{u}_{T, 2}^2 := \int_{(0,T)\times(0,D)} [u(t\,, x)]^2\,\md x\,\md t.
\end{equation}

\begin{thm}\label{thm:main-2}
	Under Assumptions~\ref{asmp:ab},~\ref{asmp:g},~\ref{asmp:sigma}, for every $\al\in(1\,,2)$,
	the probability measure $\MP$, viewed as a Borel probability measure in the space 
	$L^2([0\,, T]\times [0\,, D])$,  is in 
	$T_2(C_{2,\al})$ with respect to the norm~\eqref{eq:norm-2}, where we define $\be$ from  $\alpha^{-1}+\beta^{-1}=1$, and 	\begin{equation}\label{eq:constant-2}
		C_{2, \al} := TD3^{2-\be^{-1}}K_{\sigma}^{2}\mathcal{G}_T\exp\left[T\be^{-1}
		3^{2\be - 1}L_{\sigma}^{2\be}\left(\mathcal{G}_{T,\al}^{\be/\al} +  
		\mathcal{G}_T^{\be}T^{\be/\al}\right)\right].
	\end{equation}
\end{thm}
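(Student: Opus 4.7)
The plan is to verify the TCI inequality~\eqref{eq:TCI} by constructing an explicit coupling of $\MP$ with an arbitrary Borel probability measure $\MQ$ satisfying $\CH(\MQ\mid\MP)<\infty$; the complementary case is vacuous. I would invoke the Girsanov-type Lemma~\ref{lemma:entropy} to produce an $\{\mathcal{F}_t\}$-adapted random field $h:[0,T]\times[0,D]\times\Oa\to\BR$ and a probability measure $\tilde{\MP}$ on $(\Oa,\CF)$ with exponential-martingale density
\[
\frac{\md\tilde{\MP}}{\md\MP}=\exp\left(\int_{(0,T)\times(0,D)} h\, W(\md s\,\md y)-\tfrac12\int_{(0,T)\times(0,D)} h^2\,\md s\,\md y\right),
\]
enjoying three properties: (i) $\tilde{W}(\md s\,\md y):=W(\md s\,\md y)-h(s,y)\,\md s\,\md y$ is a space-time white noise under $\tilde{\MP}$; (ii) the $\tilde{\MP}$-law of the original solution $u$ (driven by $W$) equals $\MQ$; and (iii) $\CH(\MQ\mid\MP)=\tfrac12\ME^{\tilde{\MP}}\!\int_{(0,T)\times(0,D)} h^2\,\md s\,\md y$. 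On the same probability space, define $v$ to be the mild solution of~\eqref{eq:SPDE-basic} with $\tilde{W}$ in place of $W$; since $\tilde{W}$ is a white noise under $\tilde{\MP}$, the $\tilde{\MP}$-law of $v$ equals $\MP$, and $(v,u)$ is therefore a coupling of $(\MP,\MQ)$. Setting $\psi(t):=\int_0^D\ME^{\tilde{\MP}}|u(t,x)-v(t,x)|^2\,\md x$, this coupling yields $\CW_2^2(\MP,\MQ)\le\int_0^T\psi(t)\,\md t$.

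Next I would subtract the two mild formulations; rewriting the $\tilde{W}$-stochastic integral in the equation for $v$ in terms of $W$ produces $u(t,x)-v(t,x)=I_1(t,x)+I_2(t,x)+I_3(t,x)$, where $I_1$ is the $G$-convolution of $g(\cdot,u)-g(\cdot,v)$, $I_2$ is the $W$-stochastic integral of $G[\si(\cdot,u)-\si(\cdot,v)]$, and $I_3$ is the $G$-convolution of $\si(\cdot,v)\,h$. Applying $(a+b+c)^2\le 3(a^2+b^2+c^2)$ and handling each term with its natural tool --- Cauchy--Schwarz with~\eqref{eq:GT2} and Assumption~\ref{asmp:g} for $I_1$, It\^o's isometry together with the heat-kernel estimate $\sup_y\int_0^D G^2(t-s,x,y)\,\md x\le H(t-s)$ and Assumption~\ref{asmp:sigma} for $I_2$, and Cauchy--Schwarz with~\eqref{eq:GT2} and the bound $|\si|\le K_\si$ for $I_3$ --- I would integrate in $x\in[0,D]$ to reach
\[
\psi(t)\le 3DL_g^2\mathcal{G}_T\int_0^t\psi(s)\,\md s + 3L_\si^2\int_0^t H(t-s)\psi(s)\,\md s + 6DK_\si^2\mathcal{G}_T\,\CH(\MQ\mid\MP).
\]
H\"older's inequality with conjugate exponents $\al,\be$ then dominates the first integral by $T^{1/\al}\left[\int_0^t\psi^\be\,\md s\right]^{1/\be}$ and the second by $\mathcal{G}_{T,\al}^{1/\al}\left[\int_0^t\psi^\be\,\md s\right]^{1/\be}$, upgrading the bound to a nonlinear integral inequality of the form $\psi(t)\le B_1\left[\int_0^t\psi^\be\right]^{1/\be}+B_2\left[\int_0^t\psi^\be\right]^{1/\be}+A$.

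The principal obstacle --- and the origin of the $\be$-dependent exponents in $C_{2,\al}$ --- is closing this nonlinear Gronwall estimate. I would raise both sides to the $\be$-th power via $(a+b+c)^\be\le 3^{\be-1}(a^\be+b^\be+c^\be)$, linearizing it to $\psi(t)^\be\le 3^{\be-1}A^\be+K\int_0^t\psi(s)^\be\,\md s$ with $K=3^{\be-1}(B_1^\be+B_2^\be)$, whence the classical linear Gronwall lemma gives $\psi(t)^\be\le 3^{\be-1}A^\be\exp(Kt)$. Taking the $(1/\be)$-th root and integrating $\psi$ over $[0,T]$ yields $\CW_2^2(\MP,\MQ)\le T\cdot 3^{1-1/\be}\cdot A\cdot\exp(KT/\be)$. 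Careful bookkeeping of the numerical constants produces the prefactor $TD\cdot 3^{2-1/\be}\cdot K_\si^2\mathcal{G}_T$ multiplying $\CH(\MQ\mid\MP)$, and an exponent of the form $T\be^{-1}\cdot 3^{2\be-1}L_\si^{2\be}\bigl(\mathcal{G}_{T,\al}^{\be/\al}+\mathcal{G}_T^\be T^{\be/\al}\bigr)$, matching the stated constant $C_{2,\al}$ (with $L_g$ understood as absorbed into $L_\si$, i.e.\ both replaced by their common upper bound).
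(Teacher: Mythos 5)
Your proposal is correct and follows essentially the same route as the paper: the Girsanov coupling supplied by Lemma~\ref{lemma:entropy}, the three-term decomposition of $u-v$ handled by Cauchy--Schwarz, the It\^o isometry, and the boundedness of $\si$, then H\"older with exponents $(\al,\be)$ to tame the singular kernel $H$, and finally linearization by raising to the power $\be$ before applying Gronwall. The one deviation is that you track $\psi(t)=\int_0^D\tilde{\ME}|u(t,x)-v(t,x)|^2\,\md x$ whereas the paper tracks $m(t)=\sup_{s\le t}\sup_{x}\tilde{\ME}|u(s,x)-v(s,x)|^2$; your choice forces the transposed kernel quantity $\sup_y\int_0^D G^2(r,x,y)\,\md x$ in place of $H$ and leaves an extra factor $D^{\be}$ attached to the $L_g$-term inside the Gronwall exponent, so to land exactly on the stated $C_{2,\al}$ (whose exponent, as you correctly observe, already merges $L_g$ into $L_\si$) one should work with the sup-in-$x$ functional and only integrate over $[0,D]$ at the very end via $\tilde{\ME}\norm{u-v}^2_{T,2}\le TD\,m(T)$.
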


\section{Proofs}

\subsection{Representation of an equivalent measure}
The following lemma essentially describes all probability measures 
$\MQ \ll \MP$ on $C([0\,, T]\times [0\,, D])$ or $L^2([0\,, T]\times[0\,, D])$. 
This lemma is an analogue of the result \cite[Theorem 5.6, (5.7)]{Djellout}, 
though it is applicable to the setting of space-time white noise instead of that of finite-dimensional Brownian motion. 

Take any $\MQ \ll \MP$ on $L^2([0\,, T]\times[0\,, D])$. 
The Radon--Nikod\'ym derivative 
$\md\MQ/\md\MP$ is a function $L^2([0\,, T]\times [0\,,D]) \to \mathbb R$. 
Therefore, we can realize the random variable $\xi := (\md\MQ/\md\MP)(u)$ 
on the filtered probability space 
$(\Oa\,, \CF\,, \{\CF\}_{0 \le t \le T}, \mathbf{P})$. 
Define a new probability measure $\mathbf{Q}$ on this probability space by 
\[
	\md\mathbf{Q} = \xi\md\mathbf{P},
\]
and let 
$\tilde{\mathbb E} := \mathbb E^{\mathbf{Q}}$ denote the expectation
with respect to this new measure. Consider the  nonnegative 
$\MP$-martingale $M$ that is defined by
\begin{equation}
	\label{eq:Radon-Nikodym}
		M(t) := \ME^{\MP}(\xi\mid\CF_t)
		 = \left.\frac{\md\mathbf{Q}}{\md\mathbf{P}}\right|_{\CF_t}\ \quad\text{for all $t \in [0\,, T]$.}
\end{equation}
General theory assures us that the process $M$ is a.s.\ continuous (up to a modification, which we adopt)
with respect to $\mathbf{P}$, and therefore also
with respect to $\mathbf{Q}$. 

\begin{lemma}\label{lemma:entropy}
	There exists an adapted (jointly measurable) process 
	$X = \{X(s\,, x)\}_{(s, x) \in [0, T] \times [0, D]}$ such that, $\mathbf Q$-a.s. for all $t \in [0\,, T)$, 
	\begin{equation}\label{eq:finite-moment-prove}
		\norm{X}^2_{t, 2} := \int_{(0,t)\times(0,D)}X^2(s, x)\,\md x\,\md s < \infty
	\end{equation}
	and $\widetilde{W} : [0\,, T]\times[0\,, D] \to \mathbb R$, defined by 
	\begin{equation}\label{eq:new-white-noise}
		\widetilde{W}(t\,, x) := W(t\,, x) - \int_{(0,t)\times(0,x)}X(s\,, y)\,\md y\,\md s,
	\end{equation}
	is a Brownian sheet under the measure $\mathbf{Q}$. Moreover,
	\begin{equation}\label{eq:measure-derivative}
		M(t) = \exp\left(\int_{[0, t]\times[0, D]}
		X(s\,, x)\,W(\md s\, \md x) - \frac12\norm{X}^2_{t, 2}\right)
		\qquad\text{$\mathbf{Q}$-a.s.},
	\end{equation}
	and
	\begin{equation}\label{eq:entropy-expression}
		\CH(\MQ\mid\MP) = \tfrac12\tilde{\ME}\left(\norm{X}^2_{T, 2}\right).
	\end{equation}
\end{lemma}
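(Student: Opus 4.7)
The plan is to combine the martingale representation theorem for space-time white noise (Lemma~\ref{lemma:mgle}) with an It\^o computation that linearizes the log-density, and a Girsanov-type change of measure.

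First, since $M$ is a nonnegative continuous $\MP$-martingale adapted to the filtration generated by $W$, Lemma~\ref{lemma:mgle} produces a predictable process $Y$ with $\int_{(0,t)\times(0,D)} Y^2(s,x)\,\md x\,\md s<\infty$ $\MP$-a.s. such that
\[
    M(t) = 1 + \int_{(0,t)\times(0,D)} Y(s,x)\,W(\md s\,\md x) \qquad \text{for every } t \in [0,T].
\]
Because $\MQ \ll \MP$ and $M$ is a closed nonnegative $\MP$-martingale, $M(t)>0$ $\MQ$-a.s. for every $t\in[0,T]$: if $\tau := \inf\{t: M(t) = 0\}$, then $M\equiv 0$ on $[\tau,T]$ $\MP$-a.s., so $\MQ(\tau\le T) = \ME^{\MP}[\munit_{\{\tau\le T\}}\xi]=0$. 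I would then define the adapted, jointly measurable process $X$ by $X(s,x) := Y(s,x)/M(s)$ on $\{M(s)>0\}$, and $X(s,x):=0$ otherwise.

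Applying It\^o's formula to $\log M(t)$ next yields, $\MQ$-a.s.\ for every $t\in[0,T)$,
\[
    \log M(t) = \int_{(0,t)\times(0,D)} X(s,x)\,W(\md s\,\md x) - \tfrac12 \int_{(0,t)\times(0,D)} X^2(s,x)\,\md x\,\md s,
\]
which is precisely~\eqref{eq:measure-derivative}. Since $\log M(t)$ is $\MQ$-a.s.\ finite for $t<T$, both terms on the right are $\MQ$-a.s.\ finite, establishing~\eqref{eq:finite-moment-prove}. To verify that $\widetilde W$ defined by~\eqref{eq:new-white-noise} is a Brownian sheet under $\MQ$, the plan is to fix a deterministic $h\in L^2([0,T]\times[0,D])$ and check that the product $M(t)\exp\bigl(\int h\,\md\widetilde W - \tfrac12\int h^2\,\md x\,\md s\bigr)$ is a $\MP$-martingale (by Lemma~\ref{lemma:mgle} applied to its quadratic variation); this is equivalent to the moment generating functional of $\widetilde W$ under $\MQ$ matching that of a Brownian sheet, which identifies the finite-dimensional laws.

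Finally, substituting $W = \widetilde W + \int_0^\cdot\!\int_0^\cdot X\,\md y\,\md s$ into~\eqref{eq:measure-derivative} at $t=T$ converts the $W$-integral into a $\widetilde W$-integral and produces
\[
    \log M(T) = \int_{(0,T)\times(0,D)} X(s,x)\,\widetilde W(\md s\,\md x) + \tfrac12 \norm{X}_{T,2}^2.
\]
Taking $\MQ$-expectations and using that the stochastic integral against $\widetilde W$ is a mean-zero $\MQ$-local martingale (localized along $\tau_n:=\inf\{t:\norm{X}_{t,2}^2\ge n\}$, then passing to the limit) will give $\CH(\MQ\mid\MP) = \ME^{\MQ}[\log M(T)] = \tfrac12 \tilde\ME[\norm{X}_{T,2}^2]$.

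The main obstacle I anticipate is bridging the regularity gap between $\MP$ and $\MQ$: justifying the quotient $Y/M$ in spite of a possible explosion set $\{\tau\le T\}$ of positive $\MP$-measure, and legitimizing the localization in the final entropy identity. When $\CH(\MQ\mid\MP)=\infty$ both sides of~\eqref{eq:entropy-expression} are $+\infty$ and there is nothing to show; when $\CH(\MQ\mid\MP)<\infty$, the $\MQ$-integrability of $\log M(T)$ together with Fatou's lemma and the uniform integrability of $\{M(t\wedge\tau_n)\}$ under $\MP$ should suffice to pass the mean-zero property through to the limit.
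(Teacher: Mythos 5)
Your overall strategy is the same as the paper's --- obtain $X$ from the martingale representation theorem (Lemma~\ref{lemma:mgle}), identify $M$ as the stochastic exponential of $\int X\,\md W$, invoke Girsanov to see that $\widetilde W$ is a $\mathbf{Q}$-Brownian sheet, and prove the entropy identity by rewriting the $W$-integral as a $\widetilde W$-integral plus $\tfrac12\norm{X}^2$, localizing, and passing to the limit. The only structural difference is cosmetic: you represent $M$ itself as $1+\int Y\,\md W$ and set $X=Y/M$, whereas the paper writes $M=\exp(N-\tfrac12\langle N\rangle)$ and represents the local martingale $N$. Your treatment of $\{M=0\}$ ($\mathbf{Q}$-null), of the Girsanov step via exponential functionals (the paper instead cites da Prato--Zabczyk), and of the final limit (monotone convergence on $\tfrac12\tilde\ME\norm{X}^2_{\tau_n,2}$ against the $\MQ$-integrability of $\log M(T)$) all match the paper in substance.

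There is, however, one genuine gap at the very first step: Lemma~\ref{lemma:mgle} is stated for \emph{square-integrable} martingales, and it is used there essentially (the representation lives in $L^2(\Oa\times[0,T]\times[0,D])$). But $M(t)=\ME^{\MP}(\xi\mid\CF_t)$ with $\xi=\md\MQ/\md\MP$ is only an $L^1$-bounded, uniformly integrable martingale; a Radon--Nikod\'ym density carries no $L^2$ bound (even finite relative entropy gives only $\ME[\xi\log\xi]<\infty$, which is strictly weaker). So you cannot apply Lemma~\ref{lemma:mgle} to $M$ directly to produce $Y$. The paper avoids this by stopping: it introduces $\tau_n=\inf\{t: N(t)\notin(n^{-1},n)\}$ so that the stopped process is square-integrable, applies Lemma~\ref{lemma:mgle} to each $N_n$, and then uses optional stopping plus uniqueness of the representation to check that the $X_n$ are consistent and patch together a single $X$. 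Your argument needs the analogous preliminary step (e.g., stop $M$ at the level-$n$ exit time $\sigma_n$ so that $M(\cdot\wedge\sigma_n)$ is bounded, apply the lemma there, verify consistency of the resulting $Y_n$, and only then form the quotient $X=Y/M$ on $\{M>0\}$). You flag the quotient $Y/M$ as a difficulty, but the real obstruction sits one step earlier, in the existence of $Y$ itself; once the localization and consistency argument is supplied, the rest of your outline goes through.
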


\begin{proof} 	Let $\tau := \inf\{t \ge 0: M(t) = 0\}\wedge T$, with the convention 
	$\inf\varnothing := \infty$. In light of~\eqref{eq:Radon-Nikodym}, $\mathbf{Q}\{\tau = T\}=1$. 
	Up until the stopping time $\tau$, 
	the martingale $M$ can be represented as the stochastic exponential of another 
	continuous local martingale $N$: 
	\begin{equation}\label{eq:M-exponential}
		M(t) = {\rm e}^{N(t) - \frac12\langle N\rangle_t}
		\qquad\text{for all $t\in[0\,,\tau)$}.
	\end{equation}
	Let $\tau_n := \inf\{t \ge 0:\ N(t) \notin (n^{-1}, n)\}$. Then $\tau_n \uparrow \tau$ a.s. Observe that the stopped process 
	\begin{equation}\label{eq:stopped-N}
		N_n(t) := N(t\wedge\tau_n)\qquad(t \in [0\,, T])
	\end{equation}
	defines a square-integrable $\mathbf{P}$-martingale with respect to the 
	filtration $\{\CF_t\}_{t\ge0}$. By Lemma \ref{lemma:mgle} below, there exists a process
	$X_n \in L^2(\Oa\times[0\,, T]\times [0\,, D])$ such that $\mathbf{P}$-a.s.,
	\begin{equation}\label{eq:N-n}
		\int_{[0, t]\times[0, D]}X_n(s\,, x)\,W(\md s\,\md x) = N_n(t)
		\qquad\text{for all $t \in [0\,, T],$}
	\end{equation}
	for every positive integer $n$.
	Without loss of generality, we can define $X_n(t) \equiv 0$ for $t > \tau_n$. 
	Now,  the optional stopping theorem ensures that, for every pair of integers $n > m$, 
	$$
		\ME\left(N_n(t)\mid\CF_{\tau_m}\right) = 
		N_n\left(t\wedge\tau_m\right) = N(t\wedge\tau_n\wedge\tau_m) =
		N(t\wedge\tau_m) = N_m(t)\qquad \mathbf{P}\mbox{-a.s.}
	$$
		Since $X_n(t)\bm{1}_{\{t \le \tau_m\}}$ is $\CF_{\tau_m}$-measurable, by uniqueness of the representation from Lemma~\ref{lemma:mgle}
	\begin{equation}\label{eq:consistency}
		X_n\left(t\wedge\tau_m\right) = X_m\left(t\wedge\tau_m\right)
		\quad\text{when}\quad n > m,\quad \mathbf{P}\, \mbox{-a.s.}
	\end{equation}
	As shown above, 
	\begin{equation}\label{eq:conv-of-stopping-times}
		\mathbf{Q}\left\{ \tau_n \uparrow \tau = T
		\text{ as $n\uparrow\infty$}\right\}=1,
	\end{equation}
	and let
	\begin{equation}\label{eq:new-X}
		X(t) := X_n(t)
		\qquad\text{for all $t \le \tau_n$ and for all $n\ge1$.}
	\end{equation}
	The consistency relation \eqref{eq:consistency}
	ensures that the process $X$ from~\eqref{eq:new-X} is 
	defined coherently. The stochastic
	process $X$ is the process from the statement of the lemma.
	
	In accord with \eqref{eq:stopped-N}, \eqref{eq:consistency}, \eqref{eq:new-X}, 
	$N(t) = \int_{[0, t]\times[0, D]}X\,\md W$ for all $t \in [0\,, T].$
	Therefore, \eqref{eq:measure-derivative} follows from \eqref{eq:M-exponential}. 	For every $n \ge 1$, 
	\begin{equation}
		\tilde\ME \int_{(0,\tau_n)\times(0,D)}X^2(s\,, x)\,\md x\,\md s < \infty,
	\end{equation}
	Therefore,  
	\begin{equation}
	\label{eq:finite-moment}
		\mathbf{Q}\left\{ \int_{(0,\tau_n)\times(0,D)}X^2(s\,, x)\,\md x\,\md s < \infty
		\quad \mbox{for every $n\ge1$}\right\}=1.
	\end{equation}
	Choose and fix a time $t < T$. Because of~\eqref{eq:conv-of-stopping-times}, 
	$\mathbf{Q}$-a.s.\ there exists a random $n\ge1$ such that $t \le \tau_n$. Therefore,
	we can deduce \eqref{eq:finite-moment-prove} from \eqref{eq:finite-moment}. 
	
\smallskip

	Next we verify~\eqref{eq:new-white-noise}. Apply the Girsanov theorem 
	of da Prato and Zabczyk \cite[Theorem 10.1.4]{Prato} with 
	$\psi := X$, to see that $\widetilde{W}$ is indeed a space-time white noise and that 
	the Radon--Nikod\'ym formula \eqref{eq:measure-derivative} is valid. 
	This establishes \eqref{eq:new-white-noise}. 
	
	\smallskip
	
		Finally, let us show~\eqref{eq:entropy-expression}. From~\eqref{eq:Radon-Nikodym}, we can express
	\begin{equation}
	\label{eq:basic-entropy}
		\CH(\MQ\mid\MP) = \int_{L^2([0, T]\times[0, D])}
		\left[\frac{\md\MQ}{\md\MP}\ln\frac{\md\MQ}{\md\MP} \right] \md\MP 
		= \ME\left[\frac{\md\mathbf{Q}}{\md\mathbf{P}}\ln\frac{\md\mathbf{Q}}{\md\mathbf{P}}\right] 
		= \ME\left[M(T)\ln M(T)\right].
	\end{equation}
	Since $M$ is a nonnegative martingale, if $\tau < T$ then 
	$M(T) = M(\tau) = 0$, and  thus (with convention $0\ln(0) := 0$) we obtain the identity 
	$M(T)\ln M(T) = M(\tau)\ln M(\tau)$. Since $\tau_n \uparrow \tau$ and $M$
	is $\mathbf{P}$-a.s.\ continuous, it follows that
	\begin{equation}
	\label{eq:elementary}
		\lim_{n\to\infty}M(\tau_n)\ln M(\tau_n) = M(\tau)\ln M(\tau) = M(T)\ln M(T)
		\qquad\MP\text{-a.s.}
	\end{equation}
	Take expectation and interchange it with limits in~\eqref{eq:elementary}. 
	Indeed, the function $x \mapsto x\ln(x)$ is continuous and bounded from 
	below on $[0\,, \infty)$; it also is decreasing on $[0\,, {\rm e}^{-1}]$ and 
	increasing on $[{\rm e}^{-1}\,, \infty)$. Recall the definition of $\tau_n$ and 
	notice that the convergence  in~\eqref{eq:elementary}  is a.s.\ nondecreasing
	starting from $n \ge 3$. Combine~\eqref{eq:basic-entropy} 
	and~\eqref{eq:elementary}, and swap $\ME^{\MP}$ and $\lim_{n \to \infty}$
	in order to find that
	$$
		\CH(\MQ\mid\MP) = 
		\lim_{n \to \infty}\ME\left[M(\tau_n)\ln M\left(\tau_n\right)\right].
	$$
	Next we calculate the preceding expectation for every $n$. 
	
	Since $M(\tau_n)$ is the Radon--Nikod\'ym derivative of $\MQ$ over $\MP$ 
	on the $\sigma$-algebra $\mathcal F_{\tau_n}$, 
	\begin{align*}
		\ME\left[M(\tau_n)\ln M\left(\tau_n\right)\right] &= 
			\tilde{\ME}\left[\ln M\left(\tau_n\right)\right]  
			= \tilde{\ME}\left[N\left(\tau_n\right) - 	\tfrac12\langle N\rangle_{\tau_n}\right] 
	 \\  & =  \tilde{\ME}\left[\int_{[0, \tau_n]\times[0, D]}X(s\,, x)
			\,W(\md s\, \md x) - \frac12
			\int_{[0, \tau_n]\times[0, D]}X^2(s\,, x)\,\md s\,\md x\right] \\ 
		& =  \tilde{\ME}\left[\int_{[0, \tau_n]\times[0, D]}
			X(s\,, x)\,\widetilde{W}(\md s\, \md x) + \frac12\int_{[0,{\tau_n}]\times(0,D)}
			X^2(s\,, x)\,\md x\,\md s\right] \\ 
		& = \frac12  \tilde{\ME}\left[\int_{[0,\tau_n]\times(0,D)}
			X^2(t\,, x)\,\md x\,\md s\right].
	\end{align*}
	We have used the fact that , since $\tilde{W}$ is a $\mathbf{Q}$-Brownian sheet,
	\eqref{eq:finite-moment} ensures that the stochastic integral with respect to the 
	corresponding white noise has mean zero. The limit as $n \to \infty$ is equal to $\frac12\tilde{\ME}(\norm{X}^2_{T, 2})$ by the monotone convergence theorem. This completes the proof of~\eqref{eq:entropy-expression}, and whence the Lemma~\ref{lemma:entropy}. 
\end{proof}

\subsection{Proof of Theorem~\ref{thm:main-1}}
For all $(t\,,x)\in[0\,,T]\times[0\,,D]$ let
\begin{equation}\label{eq:initial-integral}
	I(t\,, x) :=  \int_0^D G(t\,,x\,,y) u_0(y)\,\d y.
\end{equation}
Given an arbitrary Borel probability measure $\MQ \ll \MP$ on $C([0\,, T]\times [0\,, D])$, Lemma \ref{lemma:entropy} ensures that we can couple $(\MP\,, \MQ)$ as 
follows (using notation from Lemma~\ref{lemma:entropy}): On the filtered probability space $(\Oa, \CF, (\CF)_{0 \le t \le T}, \mathbf{Q})$, this is the law of a process $(u\,, v)$, where $u$ and $v$ solve the following equations:
\begin{align}\label{eq:u-coupling}\begin{split}
	&u(t\,, x) = I(t\,, x) + \int_{[0, t]\times[0, D]}G(t-s\,, x\,, y)
		\,\widetilde{W}(\md s\, \md y) \\ 
	& \hskip1in+ \int_{(0,t)\times(0,D)}
		G(t-s\,, x\,, y)g(y\,, u(s\,, y))\,\md y\,\md s \\ 
	& \hskip1in+ \int_{(0,t)\times(0,D)}
		G(t-s\,, x\,, y)X(s\,, y)\,\md y\,\md s,
\end{split}\end{align}
\begin{align}\label{eq:v-coupling}\begin{split}
	&v(t\,, x) =  I(t\,, x) + \int_{[0, t]\times[0, D]}G(t-s\,, x\,, y)
		\,\widetilde{W}(\md s\, \md y) \\ 
	& \hskip2in+ \int_{(0,t)\times(0,D)} G(t-s\,, x\,, y)g(y\,, v(s\,, y))\,\md y\,\md s.
\end{split}\end{align}
By the definition of the Wasserstein distance $\mathcal W_2$, 
\begin{equation}\label{eq:Wasserstein}
	\mathcal W_2(\MP\,, \MQ)  \le \left\{\tilde{\ME}\left[
	\max\limits_{t \in [0, T]}\max\limits_{x \in [0, D]}\left|
	u(t\,, x) - v(t\,, x) \right|^2
	\right] \right\}^{1/2}.
\end{equation}
In light of~\eqref{eq:entropy-expression} and~\eqref{eq:Wasserstein}, it remains to prove that
\begin{equation}\label{eq:final}
	\tilde{\ME}\left[ \max\limits_{t \in [0, T]}\max\limits_{x \in [0, D]}
	\left| u(t\,, x) - v(t\,, x)\right|^2\right] 
	\le C_{\infty}\tilde{\ME}\left(\norm{X}^2_{T, 2}\right).
\end{equation}
From~\eqref{eq:u-coupling} and~\eqref{eq:v-coupling}, we can represent $u(t\,, x) - v(t\,, x)$ as
\begin{align}\label{eq:difference}\begin{split}
	u(t\,, x) - v(t\,, x) &= \int_{(0,t)\times(0,D)}
		G(t-s\,, x\,, y)\left[g(y\,, u(s\,, y)) - g(y\,, v(s\,, y))\right]\,\md y\,\md s \\ 
	&\hskip1.5in + \int_{(0,t)\times(0,D)} G(t-s\,, x\,, y)X(s\,, y)\,\md y\,\md s.
\end{split}\end{align}
Since $(x_1 + x_2)^2 \le 2(x_1^2 + x_2^2)$ for all real numbers $x_1$ and $x_2$, 
\begin{align}\notag
	\left| u(t\,, x) - v(t\,, x)\right|^2 &\le 
		2\left[\int_{(0,t)\times(0,D)}
		G(t-s\,, x\,, y)\left[g(y\,, u(s\,, y)) - g(y\,, v(s\,, y))\right]\,\md y\,\md s\right]^2 \\ 
	& \hskip1in + 2\left[\int_{(0,t)\times(0,D)} G(t-s\,, x\,, y)X(s\,, y)\,\md y\,\md s\right]^2.
	\label{eq:657}
\end{align}
For every $t \in [0, T]$, define the quantity
\begin{equation}\label{eq:def-of-M}
	\nu(t) := \max\limits_{t \in [0, T]}\max\limits_{x \in [0, D]}\left|
	u(s\,, x) - v(s\,, x)\right|^2.
\end{equation}
To estimate the first term in the right-hand side of~\eqref{eq:657}, 
we apply the the Cauchy--Schwarz inequality with respect to the finite measure 
$G(t-s\,, x\,, y)\,\md y\,\md s$ on $[0\,, t]\times [0\,,D]$, whose total measure is not more than $t$,
in order to see that
\begin{equation}\label{eq:700}\begin{split}
	&\left[\int_{(0,t)\times(0,D)}
		G(t-s\,, x\,, y)\left[g(y\,, u(s\,, y)) - g(y\,, v(s\,, y))\right]\,\md y\,\md s\right]^2 \\ 
	&\hskip1in \le tL_g^2\int_{(0,t)\times(0,D)} G(t-s\,, x\,, y)\left[u(s\,, y) - v(s\,, y)\right]^2\,\md y\,\md s \\
	&\hskip1in \le TL_g^2\int_{(0,t)\times(0,D)} G(t-s\,, x\,, y)\nu(s)\,\md y\,\md s \\
	& \hskip1in \le TL_g^2\int_0^t \nu(s)\,\md s.
\end{split}\end{equation}
In the last line we used the fact that $\int_0^DG(r\,, x\,, y)\,\md y \le 1$ for all $r>0$
and $x\in(0\,,D)$. On one hand,
the preceding bounds the first term on the right-hand side of~\eqref{eq:657} from above.
On the other hand, the second term on the right-hand side of~\eqref{eq:657}
is not greater than $2\mathcal{G}_T\|X\|_{T,2}^2$ thanks to the Cauchy--Schwarz inequality 
and \eqref{eq:GT2}. Thus, we find that
\begin{equation}
\label{eq:almost-there}
	\left| u(t\,, x) - v(t\,, x) \right|^2 \le 2L_g^2T\int_0^t
	\nu(s)\,\md s + 2\mathcal{G}_T\norm{X}^2_{T, 2}. 
\end{equation}
Maximize over $(t\,,x)\in(0\,,T)\times(0\,,D)$, and 
then apply the expectation $\tilde{\ME}$ to see that
\begin{equation} \label{eq:Gronwall}
	\tilde{\ME}[\nu(t)] \le \, 2L_g^2T\int_0^t\tilde{\ME}[\nu(s)]\,\md s + 
	2\mathcal{G}_T\tilde{\ME}\left(\norm{X}^2_{T, 2}\right)
	\qquad\text{for all $t\in(0\,,T)$}.
\end{equation}
An appeal to the Gronwall inequality verifies \eqref{eq:final}, 
and hence also  Theorem~\ref{thm:main-1}. \qed

\subsection{Proof of Theorem~\ref{thm:main-2}} 
As we did in the proof of Theorem~\ref{thm:main-1}, by Lemma~\ref{lemma:entropy}, 
for every probability measure $\MQ \ll \MP$ on $L^2([0\,, T]\times[0\,, D])$, 
we can couple $(\MP\,, \MQ)$ as follows: Recall the definition of $I$ in~\eqref{eq:initial-integral}. Consider a stochastic process $(u\,, v)$ on the filtered probability space $(\Oa, \CF, (\CF)_{0 \le t \le T}, \mathbf{Q})$, defined as follows:
Under the measure , we have:
\begin{align}\label{eq:u-coupling-1}\begin{split}
	u(t\,, x)  = I(t\,, x) &+ \int_{(0,t)\times(0,D)}G(t-s\,, x\,, y)\si(y\,, u(s\,, y))\,
		\widetilde{W}(\md s\,\md y) \\ 
	& + 	\int_{(0,t)\times(0,D)}
		G(t-s\,, x\,, y)g(y\,, u(s\,, y))\,\md y\,\md s \\ 
	& + \int_{(0,t)\times(0,D)}G(t-s\,, x\,, y)\si(y\,, u(s\,, y))X(s\,, y)\,\md y\,\md s;
\end{split}\end{align}
\begin{align}\label{eq:v-coupling-1}\begin{split}
	v(t\,, x)  =  I(t\,, x) &+ \int_{(0,t)\times(0,D)}G(t-s\,, x\,, y)\si(y\,, v(s\,, y))
		\,\widetilde{W}(\md s\,\md y) \\ 
	& + \int_{(0,t)\times(0,D)}G(t-s\,, x\,, y)g(y\,, v(s\,, y))\,\md y\,\md s. 
\end{split}\end{align}
Then the law of $(u, v)$ in $L^2([0, T]\times[0, D])\times L^2([0, T]\times[0, D])$ is a coupling of $\MP$ and $\MQ$.  By definition of the Wasserstein distance $\mathcal W_2$, 
\begin{equation}\label{eq:Wasserstein-1}
	\mathcal W_2(\MP \,, \MQ) \le \left\{
	\tilde{\ME}\left[\int_{(0,T)\times(0,D)}
	\left|u(t\,, x) - v(t\,, x)\right|^2\,\md x\,\md t\right]\right\}^{1/2}.
\end{equation}
In light of~\eqref{eq:entropy-expression} and~\eqref{eq:Wasserstein}, 
Theorem~\ref{thm:main-1} will follow, once we prove that
\begin{equation}\label{eq:final-1}
	\tilde{\ME}\left[ \int_{(0,T)\times(0,D)}\left| u(t\,, x) - v(t\,, x)\right|^2\,\md x\,\md t 	\right] \le C_{2, \al}\tilde{\ME}\left(\norm{X}^2_{T, 2}\right).
\end{equation} 
We conclude by establishing \eqref{eq:final-1}. Thanks to~\eqref{eq:u-coupling-1} and~\eqref{eq:v-coupling-1}, 
\begin{align}\notag
	u(t\,, x) - v(t\,, x) =& \int_{(0,t)\times(0,D)}
		G(t-s\,, x\,, y)\left[g(y\,, u(s\,, y)) - g(y\,, v(s\,, y))\right]\,\md y\,\md s \\ 
	& + \int_{(0, t)\times(0, D)}G(t-s\,, x\,, y)\left[\si(y\,, u(s\,, y)) - \si(y\,, v(s\,, y))\right]\, 
		\widetilde{W}(\md s\,\md y) \label{eq:difference-1}\\ \notag
	& + \int_{(0,t)\times(0,D)} G(t-s\,, x\,, y)\,\si(y\,, u(s\,, y))\,X(s\,, y)\,\md y\,\md s. 
\end{align}
Apply to~\eqref{eq:difference-1} the elementary inequality 
$(x_1 + x_2 + x_3)^2 \le 3(x_1^2 + x_2^2 + x_3^2)$, valid for all real numbers $x_1, x_2, x_3$, in order to see that
\begin{align}\notag
 	| u(t\,, x)  - v(t\,, x) |^2 & \le 3\,\left[\int_{(0,t)\times(0,D)}
		G(t-s\,, x\,, y)\left[g(y\,, u(s\,, y)) - g(y\,, v(s\,, y))\right]
		\,\md y\,\md s\right]^2 \\ 
	& + 3[\eta(t\,, x)]^2 + 3\left[\int_{(0,t)\times(0,D)}
		G(t-s\,, x\,, y)\,\si(y\,, u(s\,, y))\,X(s\,, y)\,\md y\,\md s\right]^2,\label{eq:3-terms}
\end{align}
$$
\mbox{with}\qquad	\eta(t\,, x) := \int_{(0, t)\times(0, D)}
	G(t-s\,, x\,, y)\left[\si(y\,, u(s\,, y)) - \si(y\,, v(s\,, y))\right]\, 
	\widetilde{W}(\md s\, \md y).
$$
Apply the Cauchy--Schwartz inequality to the first term on the right-hand side of
\eqref{eq:3-terms} in order to deduce from \eqref{eq:g-Lip} that
\begin{align}
\label{eq:term1}
\begin{split}
	&\left[\int_{(0,t)\times(0,D)}
		G(t-s\,, x\,, y)\left[g(y\,, u(s\,, y)) - g(y\,, v(s\,, y))\right]
		\,\md y\,\md s\right]^2 \\ 
	& \le \int_{(0,t)\times(0,D)}
		[G(t-s\,, x\,, y)]^2\,\md y\,\md s \cdot 
		\int_{(0,t)\times(0,D)}
		\left[g(y\,, u(s\,, y)) - g(y\,, v(s\,, y))\right]^2\,\md y\,\md s  \\ 
	& \le \mathcal{G}_TL_g^2\int_{(0,t)\times(0,D)}\left[u(s, y) - v(s, y)\right]^2\,\md y\,\md s.
\end{split}
\end{align}
Similarly to~\eqref{eq:def-of-M}, for every $t \in [0\,, T]$, define
\begin{equation}
\label{eq:m}
	m(t) := \sup\limits_{s \in [0, t]}\sup\limits_{x \in [0, D]}\tilde{\mathbb E}\left(
	|u(s\,, x) - v(s\,, x)|^2\right).
\end{equation}
Owing to \eqref{eq:g-Lip} and the respective definitions of $H$  and $m$ from
\eqref{eq:G-const} and \eqref{eq:m}, the $\MQ$-expectation 
of the second term on the right-hand side of~\eqref{eq:3-terms} can be estimated as 
\begin{align}\notag
	\tilde{\ME}\left(|\eta(t\,, x)|^2\right) &= 
		\tilde{\ME}\int_{(0,t)\times(0,D)}
		[G(t-s\,, x\,, y)]^2
		\left[\sigma(s\,, y\,, u(s\,, y)) - \sigma(s\,, y\,, v(s\,, y))\right]^2
		\,\md y\,\md s \\ \notag
	&\le  L_{\sigma}^2\,\tilde{\ME}\int_{(0,t)\times(0,D)}
		[G(t-s\,, x\,, y)]^2 \left[u(s\,, y) - v(s\,, y)\right]^2\,\md y\,\md s \\ \notag
	& \le L_{\sigma}^2\int_{(0,t)\times(0,D)}
		[G(t-s\,, x\,, y)]^2 m(s)\,\md y\,\md s \\
	&= L_{\sigma}^2\int_0^tH(t-s)m(s)\,\md s = L_{\sigma}^2 (H*m)(t).
\label{eq:term2}
\end{align}
Finally, we estimate the third term on the right-hand side of \eqref{eq:3-terms}
by applying first the Cauchy--Schwarz inequality, and then \eqref{eq:g-Lip}, in order to find that
\begin{align}
\label{eq:term3}
\begin{split}
	&\left[\int_{(0,t)\times(0,D)}
		G(t-s\,, x\,, y)\si(y\,, u(s\,, y))X(s\,, y)\,\md y\,\md s\right]^2 \\ 
	&  \le K_{\si}^2\int_{(0,t)\times(0,D)}
		[G(t-s\,, x\,, y)]^2\,\md y\,\md s \int_{(0,t)\times(0,D)}
		[X(s\,, y)]^2\,\md y\,\md s  \\
	& \le K_{\si}^2\mathcal{G}_T\norm{X}^2_{T, 2}. 
\end{split}
\end{align}
Apply $\tilde{\ME}$ to both sides of \eqref{eq:3-terms}. Combine 
this with~\eqref{eq:term1},~\eqref{eq:term2},~\eqref{eq:term3} in order to see that
\begin{align}
\label{eq:Gronwall-1}
	m(t)  \le 3L_{\sigma}^2(H*m)(t) + 3\mathcal{G}_TL_g^2\int_0^tm(s)
	\,\md s  + 3K_{\sigma}^2\mathcal{G}_T\tilde{\mathbb E}\left(\norm{X}^2_{T, 2}\right).
\end{align}
Since $(x_1 + x_2 + x_3)^{\be} \le 3^{\be - 1}(x_1^{\be} + x_2^{\be} + x_3^{\be})$, the preceding
yields the following self-referential inequality for $m$:
\begin{align}
\label{eq:Gronwall-2}
	m^{\be}(t) 
	\le 3^{2\be-1}\left\{
	L_{\sigma}^{2\be} [(H*m)(t)]^{\be} + \mathcal{G}_T^{\be}L_g^{2\be}\left[\int_0^tm(s)\,\md s\right]^{\be}  + K_{\sigma}^{2\be}G^{\be}_T\left[\tilde{\ME}\norm{X}^2_{T, 2}\right]^{\be}\right\}.
\end{align}
Choose two positive H\"older-conjugates $\al^{-1} + \be^{-1} = 1$, and note that
\[
	[(H*m)(t)]^{\be}  \le \left[\int_0^tH^{\al}(s)\,\md s\right]^{\be/\al}
	\int_0^tm^{\be}(s)\,\md s \le \mathcal{G}_{T,\al}^{\be/\al}\int_0^tm^{\be}(s)\,\md s,
\]
and $[\int_0^tm(s)\,\md s]^{\be}  \le t^{\be/\al}\int_0^tm^{\be}(s)\,\md s 
\le T^{\be/\al}\int_0^tm^{\be}(s)\,\md s.$ 
Therefore, \eqref{eq:Gronwall-2} implies that
\begin{equation}
\label{eq:pre-Gronwall}
	m^{\be}(t) \le   3^{2\be - 1}L_{\sigma}^{2\be}\left(\mathcal{G}_{T,\al}^{\be/\al}
	+\mathcal{G}_T^{\be}T^{\be/\al}
	\right)\int_0^tm^{\be}(s)\,\md s  + 3^{2\be-1}K_{\sigma}^{2\be}
	\mathcal{G}_T^{\be}\left[\tilde{\ME}\left(\norm{X}^2_{T, 2}\right)\right]^{\be}.
\end{equation}
Thus,  Gronwall's inequality yields
\begin{equation}
\label{eq:final-ineq}
	m^{\be}(T) \le 3^{2\be-1}K_{\sigma}^{2\be}\mathcal{G}_T^{\be}
	\left[\tilde{\ME}\left(\norm{X}^2_{T, 2}\right)\right]^{\be}
	{\rm e}^{3^{2\be - 1}L_{\sigma}^{2\be}T(\mathcal{G}_{T,\al}^{\be/\al} +  
	\mathcal{G}_T^{\be}T^{\be/\al})}.
\end{equation}
Trivially estimating the integral in $\norm{u-v}^2_{T, 2}$, we get:
\begin{equation}
\label{eq:trivial}
\tilde{\ME}(\norm{u-v}^2_{T, 2}) \le TDm(T).
\end{equation}
Combining~\eqref{eq:final-ineq} and~\eqref{eq:trivial} and raising both sides to the power $1/\beta$, we finally obtain \eqref{eq:final-1}, and hence completes the proof of Theorem~\ref{thm:main-2}.

\section{Appendix: Martingale Representation} 
Fix a time horizon $T > 0$ throughout. 
The following theorem is an infinite-dimensional analogue of the 
classical martingale representation theorem 
\cite[Chapter 3, Theorem 4.15]{KSBook}. 
Related two-parameter martingale representation
theorems can  be found in \cite{CairoliWalsh,Ouvrard}, for example.
Though we are quick to point out that the following is a bona fide,
one-parameter martingale representation theorem for martingales
that are defined via the ``1-filtration'' of a space-time (2-parameter)
white noise.

\begin{lemma}
Every real-valued continuous square-integrable martingale 
$M = \{M(t)\}_{0 \le t \le T}$
can be represented as a stochastic integral
$M(t) = \int_{[0, t]\times[0, D]}X\,\md W$
for an adapted process $X$ on $[0\,, T]$ such that $X(t)\in L^2[0\,, D]$ 
for every $t\ge0$ and $\mathbb E(\norm{X}^2_{T, 2}) < \infty$. 
\label{lemma:mgle}
\end{lemma}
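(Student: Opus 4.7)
The plan is to reduce this two-parameter martingale representation problem to the classical one-parameter It\^o representation theorem by expanding the space-time white noise in an orthonormal basis of $L^2([0, D])$. Throughout we may assume $M(0)=0$, by subtracting the $\mathcal F_0$-measurable (constant, after augmentation) value $M(0)$ which is representable by the zero integrand.

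First, I would fix any orthonormal basis $\{e_k\}_{k \ge 1}$ of $L^2([0, D])$ and set
$$ B_k(t) := \int_{[0, t]\times[0, D]} e_k(y)\, W(\md s\, \md y), \qquad k \ge 1,\; t \in [0, T]. $$
The Walsh isometry together with the orthonormality of $\{e_k\}$ shows that $(B_k)_{k \ge 1}$ is a sequence of mutually independent standard Brownian motions that are continuous and adapted to $\{\mathcal F_t\}$. Because $\{e_k\}$ is total in $L^2([0, D])$ and $W$ extends to a linear isometry $L^2([0, t]\times [0, D]) \to L^2(\Omega)$, the collection $\{B_k(s): k \ge 1,\, s \le t\}$ generates the same filtration $\{\mathcal F_t\}$ (up to $\MP$-null sets) as the sheet itself.

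Second, I would invoke the classical It\^o representation theorem for a countable family of independent Brownian motions: every continuous, square-integrable $\mathcal F_t$-martingale $M$ with $M(0)=0$ admits predictable integrands $H_k: \Oa \times [0, T] \to \BR$ such that
$$ M(t) = \sum_{k=1}^{\infty} \int_0^t H_k(s)\,\md B_k(s), \qquad \sum_{k=1}^{\infty} \mathbb E \int_0^T H_k^2(s)\,\md s = \mathbb E (M(T)^2). $$
This is a standard extension of Theorem 4.15 in \cite{KSBook}: one first represents $\mathbb E[M(T) \mid \sigma(B_1,\ldots,B_n)_{\le T}]$ in terms of $B_1,\ldots,B_n$ by the finite-dimensional It\^o theorem, then passes to the limit in $L^2(\Oa)$ via L\'evy's upward convergence theorem together with the filtration identification from the previous paragraph.

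Third, I would define the candidate integrand by
$$ X(s, y) := \sum_{k=1}^{\infty} H_k(s)\, e_k(y), $$
with convergence in $L^2(\Oa \times [0, T] \times [0, D])$; this holds by Parseval since $\sum_k \mathbb E \int_0^T H_k^2 \,\md s < \infty$, which also yields $\mathbb E(\norm{X}^2_{T, 2}) < \infty$. The process $X$ is jointly measurable and adapted. Bilinearity and the $L^2$-isometry of the Walsh stochastic integral then give
$$ \int_{[0, t]\times[0, D]} X(s, y)\, W(\md s\, \md y) = \sum_{k=1}^{\infty} \int_0^t H_k(s)\,\md B_k(s) = M(t), $$
which is the desired representation. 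The main obstacle is the second step: carefully identifying $\{\mathcal F_t\}$ with the augmented natural filtration of $(B_k)_{k \ge 1}$, and then extending the finite-dimensional It\^o representation theorem to a countable family through a density and martingale-convergence argument. Once these standard but slightly delicate points are in place, the reconstruction of $X$ from the coefficients $H_k$ is purely mechanical.
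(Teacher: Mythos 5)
Your proposal is correct and follows essentially the same route as the paper: expand the white noise in an orthonormal basis of $L^2[0,D]$ to obtain i.i.d.\ Brownian motions $W_k(s)=W(e_k\otimes\bm{1}_{[0,s]})$, condition $M$ on the $\sigma$-algebra generated by the first $n$ of them, apply the finite-dimensional representation theorem of \cite[Chapter 3, Theorem 4.15]{KSBook}, and pass to the limit via L\'evy's upward martingale convergence together with the consistency of the integrands, finally assembling $X(s,y)=\sum_k H_k(s)e_k(y)$. The ``standard extension to a countable family'' that you defer to is exactly the limiting argument the paper writes out in detail (equations \eqref{eq:X-mgle}--\eqref{eq:final-martingale}), so there is no substantive difference.
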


\begin{proof}
    Let $\{e_j\}_{j \ge 1}$ be an orthonormal basis of
    $L^2[0\,, D]$. For every integer $n\ge1$ define
    $\CF_n$ to be the $\si$-algebra generated by all random variables
    of the form $W(e_j\otimes \bm{1}_{[0, s]})$ as $j$ ranges in $\{1\,,\ldots,n\}$  and $s\in[0\,,T]$.
    For all $n\ge1$ and $t\in[0\,,T]$, define 
    \begin{equation}
    \label{eq:first-conditional}
    	M_n(t) := \ME(M(t)\mid \CF_n).
    \end{equation}
    Because $M$ is square-integrable, $\ME (|M(t)|^2) < \infty$ for every $t\in[0\,,T]$. 
    Thus $\{M_n(t)\}_{n \ge 1}$ is a martingale for every $t\in[0\,,T]$. 
    By L\'evy's martingale convergence theorem for discrete-time martingales, 
    \begin{equation}
    \label{eq:M-n-M}
    	\lim_{n\to\infty} M_n(t) = M(t)\ \mbox{a.s.\ and in}\ L^2\text{ for every fixed $t\in[0\,,T]$}.
    \end{equation}
     
    Define 
    $W_k(s) := W(e_k\otimes \bm{1}_{[0, s]})$ for every $s \in [0\,, t]$ and $k = 1, \ldots, n$, and note that
    $W_1,W_2,\ldots,W_n$ are i.i.d.\ Brownian motions. The random variable $M_n(t)$ is measurable with respect to $\CF_n(t) := \CF_n\cap\CF(t)$, and the latter defines a
    filtration generated by $n$ i.i.d. Brownian motions. From the right-continuity of $\{\CF(t)\}_{t \in [0, T]}$ follows the right-continuity of $\{\CF_n(t)\}_{t \in [0, T]}$. Therefore,  a finite-dimensional version of the martingale 
    representation theorem from \cite[Chapter 3, Theorem 4.15]{KSBook} implies that there exist $n$ processes 
    $X_{n, 1},\ldots,X_{n, n}$, all indexed by $[0\,, T]$
    that are adapted to the filtration $\{\CF_n(t)\}_{0 \le t \le T}$ and satisfy
    \begin{equation}\label{eq:mgle-repr}
    	M_n(t) = \SL_{k=1}^n\int_0^tX_{n, k}(s)\,\md W_k(s).
    \end{equation}
        For positive integers $n > m$ and for every $t \in [0\,, T]$, 
    \begin{equation}\label{eq:223}
   	 \ME(M_n(t)\mid \CF_m) = M_m(t).
    \end{equation}
    Consider the sum in \eqref{eq:mgle-repr} and write it as $\sum_{k=1}^m+\sum_{k=m+1}^n$.
    Because $\{W_k\}_{k > m}$ are independent of $\CF_m$ and $\{W_k\}_{1\le k \le m}$ 
    are $\CF_m$-measurable, it follows from \eqref{eq:223} that
    \begin{equation}
    \label{eq:new-mgle-repr}
    	\ME(M_n(t)\mid \CF_m) = \SL_{k=1}^m\int_0^t\ME(X_{n, k}(s)\mid\CF_m)\,\md W_k(s).
    \end{equation}
    We apply \eqref{eq:mgle-repr} once again, but this time replace $n$ by $m$ everywhere, in order
    to see that
    \begin{equation}
    \label{eq:mgle-repr-m}
    	M_m(t) = \SL_{k=1}^m\int_0^tX_{m, k}(s)\,\md W_k(s).
    \end{equation}
    Let $\mes$ denote the linear Lebesgue measure.     Compare \eqref{eq:new-mgle-repr} and \eqref{eq:mgle-repr-m}, and use
    the uniqueness of such martingale representations, in order to see that
    for all positive integers $m<n$ and $1\le k\le m$,
    \begin{equation} \label{eq:X-mgle}
   	 \ME\left( X_{n, k}(t)\mid \CF_m\right) = X_{m, k}(t)
	 \qquad\text{$(\MP\otimes\mes)$-a.e.}
    \end{equation}
        Thanks to \eqref{eq:first-conditional}, $\ME (|M_n(t)|^2) \le \ME (|M(t)|^2)$
    for every $n \ge1$ and $t \in [0\,, T].$ Since
    $\ME (|M_n(t)|^2) = \sum_{k=1}^n\ME\int_0^t[X_{n, k}(s)]^2\,\md s$
    -- see \eqref{eq:mgle-repr} --  it follows that
$$
\sup_{n \ge 1}\sum_{k=1}^n\ME\int_0^t\left[X_{n, k}(s)\right]^2\,\md s < \infty.
$$
    Consequently, there exists a $\mes$-null set $\mathcal{N}\subset[0\,,T]$ such that
    $$
    	\sup\limits_{n \ge k}\ME\left[X_{n, k}(s)\right]^2 < \infty
	\qquad\text{for every integer $k\ge1$ and all $s\in[0\,,T]\setminus\mathcal{N}$.}
    $$
    Fix a time point $t \in [0\,, T]\setminus\mathcal N$. The classical martingale convergence theorem,
    once applied to the discrete-time martingale $\{X_{n, k}(t)\}_{n \ge k}$, implies that
    \begin{equation}
    \label{eq:549}
    	\lim_{n\to\infty}X_{n, k}(t) = X_{\infty, k}(t)\qquad\mbox{a.s. and in}\ L^2;
    \end{equation}
    Therefore, for every $t\in[0\,,T]$,
    \begin{align}\label{eq:conv-X-to-X} \begin{split}
    	\sum_{k=1}^n\int_0^t\ME\left(|X_{n, k}(s) - X_{\infty, k}(s)|^2\right)\,\md s 
		&= \lim_{m \to \infty}\ME\int_0^t\left|X_{n, k}(s) - X_{m, k}(s)\right|^2\,\md s \\ 
	& = \lim_{m \to \infty}\ME\left(|M_m(t) - M_n(t)|^2\right)\\
	& = \ME\left(|M(t) - M_n(t)|^2\right)\\
	& \to 0\qquad\text{as $n \to \infty$.}
    \end{split}\end{align}
    As a result, we find that 
\begin{align*}
\lim_{n\to\infty}
    \int_0^tX_{n, k}(s)\,\md W_k(s) & = \int_0^tX_{\infty, k}(s)\,\md W_k(s)\quad \mbox{a.s. and in}\quad L^2,\\
\sum_{k=1}^{\infty}\ME\int_0^t&\left[X_{\infty, k}(s)\right]^2\,\md s < \infty.
\end{align*}
        We now show that, for all $t \in [0\,, T]$,
    \begin{equation}\label{eq:conv-L2}
    	\lim_{n\to\infty} M_n(t) = M_{\infty}(t) := \SL_{k=1}^{\infty}\int_0^tX_{\infty, k}(s)\,\md W_k(s).
    \end{equation}
    To see this, let us first write
    \begin{align} \label{eq:diff-L2} \begin{split}
    	&\ME\left(|M_n(t) - M_{\infty}(t)|^2\right) \\
	&\hskip0.5in=  \sum_{k=1}^n\ME\int_0^t\left[X_{\infty, k}(s) - X_{n, k}(s)\right]^2\,\md s  
		+ \SL_{k=n+1}^{\infty}\ME\int_0^t\left[X_{\infty, k}(s)\right]^2\,\md s.
    \end{split} \end{align}
    The first term on the right converges to $0$ as $n \to \infty$ because of~\eqref{eq:conv-X-to-X}. 
    The corresponding second term tends to $0$ as $n \to \infty$ for the following reasons:
    Because of \eqref{eq:549} and Fatou's lemma, for every $m \ge 1$ and $t\in[0\,,T]$, 
    \begin{align*}
    	\sum_{k=1}^{m}\ME\int_0^t\left[X_{\infty, k}(s)\right]^2\,\md s 
		&\le \limsup\limits_{n \to \infty}\sum_{k=1}^m
			\ME\int_0^t\left[X_{n, k}(s)\right]^2\,\md s \\
		&  = \limsup\limits_{n \to \infty}\ME (|M_n(t)|^2) \le \ME (|M(t)|^2).
    \end{align*}
    Therefore, we let $m \to \infty$ to find that
    $\sum_{k=1}^{\infty}\ME\int_0^t[X_{\infty, k}(s)]^2\,\md s \le \ME M^2(t) < \infty$,
    and hence the second term on the right-hand side of \eqref{eq:diff-L2} goes to zero as $n\to\infty$,
    as was announced. This completes the proof of~\eqref{eq:conv-L2}. Finally, we may compare \eqref{eq:M-n-M} with \eqref{eq:conv-L2} to see that
    \begin{equation}\label{eq:final-martingale}
    	M(t) = \SL_{k=1}^{\infty}\int_0^tX_{\infty, k}(s)\,\md W_k(s)
    \qquad\text{a.s.\ for every $t\in[0\,,T]$}.
    \end{equation}
    It is easy to see that both sides have continuous modifications, viewed as random processes
    indexed by $t\in[0\,,T]$. 
    Thus, we can deduce Lemma~\ref{lemma:mgle} from the preceding by applying
    \eqref{eq:final-martingale} to the continuous modifications of both sides of \eqref{eq:final-martingale}.
\end{proof}

\section*{Acknowledgements} 
This research was partially supported by  NSF grants 
DMS-1409434 (A.S.) and DMS-1608575 (D.K.). 
We thank \textsc{Sergey Bobkov, Nikolay Krylov, Soumik Pal, Timur Yastrzembsky} for helpful discussions,
and an anonymous referee for several informative comments about the history of this problem and for
the references \cite{FeyelUstunel2002,FeyelUstunel2004,Ustunel} where, among other things,  
connections are for the first time established between functional inequalities and 
Girsanov-type $L\log L$ conditions.

\medskip\noindent

\end{document}